\documentclass[reqno, oneside, 12pt]{amsart}

\usepackage[letterpaper]{geometry}
\geometry{tmargin=1in,bmargin=1in,lmargin=1in,rmargin=1in}

\usepackage{enumerate, hyperref,url,amssymb,amsmath,amsthm,amsxtra,mathtools,mathrsfs,calc,nccmath,color}

\usepackage{calc}
\usepackage{graphicx}
\usepackage{caption}
\usepackage{subcaption}


\newcommand{\Z}{\mathbb{Z}}
\newcommand{\Q}{\mathbb{Q}}

\newcommand{\ep}{\varepsilon}

\let\temp\phi
\let\phi\varphi
\let\varphi\temp

\renewcommand{\(}{\left(}
\renewcommand{\)}{\right)}

\newcommand{\ord}{\operatorname{ord}}
\newcommand{\SL}{\operatorname{SL}}
\newcommand{\GL}{\operatorname{GL}}

\newcommand{\Tr}{\operatorname{Tr}}

\renewcommand{\sl}{\big|}
\newcommand{\sk}{\big|_k }

\newcommand{\floor}[1]{\left\lfloor #1 \right\rfloor}



\newtheorem{theorem}{Theorem}[section]

\newtheorem{corollary}[theorem]{Corollary}

\theoremstyle{remark}
\newtheorem*{remark}{Remark}

\newtheorem*{example}{Example}

\numberwithin{equation}{section}


\begin{document}


\title{A Higher Weight Analogue of Ogg's Theorem on Weierstrass Points }

\date{\today}
\author{Robert Dicks}
\address{Department of Mathematics\\
University of Illinois\\
Urbana, IL 61801} 
\email{rdicks2@illinois.edu}



 \begin{abstract}
 For a positive integer $N$, we say that $\infty$ is a Weierstrass point on the modular curve $X_0(N)$ if there is a non-zero cusp form of weight $2$ on $\Gamma_0(N)$ which vanishes at $\infty$ to order greater than the genus of $X_0(N)$. If $p$ is a prime with $p \nmid N$, Ogg proved that $\infty $ is not a Weierstrass point on $X_0(pN)$ if the genus of $X_0(N)$ is $0$. We prove a similar result for even weights $k \geq 4$. We also study the space of weight $k$ cusp forms on $\Gamma_0(N)$ vanishing to order greater than the dimension.
 \end{abstract}
\maketitle
 \section{Introduction}
 
 If $k$ and $N$ are positive integers, let $S_k(N)$ be the rational vector space of cusp forms of weight $k$ on $\Gamma_0(N)$ with rational Fourier coefficients.
   These forms have a Fourier expansion at $\infty$ of the form
\[
  f(z)= \sum_{n=n_0}^{\infty} a(n) q^{n} \text{ }\text{ with }\text{ }a(n_0) \neq 0,
\]
and we define $\ord_{\infty}(f) := n_0$. Let $g(N)=\dim(S_2(N))$ be the genus of $X_0(N)$. We say that $\infty$ is a Weierstrass point on the modular curve $X_0(N)$ if there exists $0 \neq f \in S_2(N)$ such that $\ord_{\infty}(f) > g(N)$. Ogg \cite{Ogg} proved the following theorem.
\begin{theorem}\label{thm:Ogg}
  If $p$ is a prime such that $p \nmid N$, and if $g(N)=0$, then $ \infty $ is not a Weierstrass point on $X_0(pN)$.
\end{theorem}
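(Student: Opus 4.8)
The plan is to recast the statement as a rank condition and then exploit the $p$-new structure forced by $g(N)=0$. Since $\dim S_2(pN)=g(pN)$ and the orders of vanishing at $\infty$ of the nonzero forms make up exactly $g(pN)$ distinct positive integers, the assertion that $\infty$ is \emph{not} a Weierstrass point is equivalent to the statement that these orders are precisely $1,2,\dots,g(pN)$, which in turn is equivalent to injectivity of the coefficient map $f\mapsto(a(1),\dots,a(g(pN)))$ on $S_2(pN)$. Writing $a(n)=a_1(f\mid T_n)$ for the full Hecke algebra $\mathbb{T}$ (including $U_p$), this is the same as showing that $T_1,\dots,T_{g(pN)}$ span $\mathbb{T}\otimes\Q$. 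So the whole problem is to prove that the first $g(pN)$ Fourier coefficients separate the cusp forms of level $pN$.

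First I would extract the structural consequences of $g(N)=0$. Because $X_0(N)\to X_0(M)$ is surjective for every $M\mid N$, we get $g(M)=0$ and hence $S_2(M)=0$ for all $M\mid N$; thus $S_2(pN)$ has trivial $p$-old subspace and is entirely $p$-new. On the $p$-new space (in weight $2$) the operator $U_p$ is an involution, $U_p^2=\mathrm{id}$, and the Fricke involution satisfies $W_p=-U_p$. Geometrically, $\pi\colon X_0(pN)\to X_0(N)$ is unramified at $\infty$, the cusp $\infty$ and the width-$p$ cusp $c_p=[1/N]$ are the two points lying over the cusp $\infty$ of $X_0(N)$, and $W_p$ interchanges them.

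Next I would dispose of the ``generic'' case using Atkin--Lehner symmetry and the valence formula. Decompose $S_2(pN)=S^+\oplus S^-$ into $U_p$-eigenspaces; since $W_p$ acts by $\mp1$ on $S^\pm$, any nonzero eigenform $f$ has $\ord_{c_p}(f)=\ord_\infty(f\mid W_p)=\ord_\infty(f)$, so the associated differential $\omega_f$ vanishes to order $\ord_\infty(f)-1$ at both $\infty$ and $c_p$. As $\deg\mathrm{div}(\omega_f)=2g(pN)-2$, this forces $\ord_\infty(f)\le g(pN)$ for every eigenform. If the order-sets of $S^+$ and $S^-$ at $\infty$ are disjoint, the $g^+ + g^-=g(pN)$ eigenform orders are distinct and therefore exhaust all orders occurring in $S_2(pN)$; each being $\le g(pN)$, the theorem follows.

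The hard part will be the remaining case of \emph{cross-cancellation}: when $S^+$ and $S^-$ each contain a form of the same order $v_0$ at $\infty$, a combination $f^+ + f^-$ can vanish to order $>v_0$, and the valence bound applied to $f^+ + f^-$ only yields $\ord_\infty\le 2g(pN)-1$ rather than $\le g(pN)$. Removing this factor of two is exactly the coefficient-separation statement of the first paragraph, and I expect it to require genuine arithmetic input rather than the soft geometry above. My plan here is to use the explicit description of $S_2(pN)$ as built from the $V_d$-images of $p$-newforms of levels $pM$ ($M\mid N$) together with multiplicity one, to show that two distinct systems of Hecke eigenvalues already differ among the first $g(pN)$ coefficients; this is the step in which the hypothesis $g(N)=0$ must be used decisively, and it is the main obstacle.
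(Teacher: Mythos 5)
Your opening two steps are sound: the reformulation via the gap sequence, the observation that $g(N)=0$ forces $S_2(pN)$ to be entirely $p$-new with $U_p^2=\mathrm{id}$ and $f\sk W_p^{pN}=-f\sl U_p$ in weight $2$, and the valence-formula argument giving $\ord_{\infty}(f)\leq g(pN)$ for every Atkin--Lehner eigenvector (essentially Ogg's classical eigenform observation). But the cross-cancellation case, which you correctly identify as the entire content of the theorem, is left open, and the plan you sketch for it would fail. Pairwise separation of Hecke eigensystems within the first $g(pN)$ coefficients does not follow from multiplicity one, which is purely qualitative; the effective versions available (Sturm-type bounds) separate forms of level $pN$ and weight $2$ only within roughly $\tfrac{1}{6}(p+1)[\SL_2(\Z):\Gamma_0(N)]\approx 2g(pN)$ coefficients, i.e.\ they reproduce exactly the factor of two you need to remove, so invoking them is circular. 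Worse, even granting pairwise eigensystem separation in the first $g(pN)$ coefficients, this would not imply injectivity of $f\mapsto(a(1),\dots,a(g(pN)))$: a kernel vector is in general a linear combination of many eigenforms with pairwise distinct eigensystems, and pairwise early disagreement of eigenvalues places no constraint on such a combination. As written, your argument proves the theorem only under the additional hypothesis that the order sets of $S^{+}$ and $S^{-}$ at $\infty$ are disjoint.

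For comparison: the paper does not prove Theorem~\ref{thm:Ogg} by characteristic-zero geometry at all; it cites Ogg and recovers the statement (for $p\geq 5$, the range of Theorem~\ref{thm:main} when $k=2$) as the $k=2$ instance of Corollary~\ref{thm:Subspace}. The decisive input is $p$-adic rather than linear-algebraic: normalize $f$ so that $v_p(f)=0$; since $S_2(N)=\{0\}$, surjectivity of the trace forces $\Tr_{N}^{pN}(f\sk W_p^{pN})=f\sk W_p^{pN}+f\sl U_p=0$, whence $v_p(f\sk W_p^{pN})\geq 0$; then the mod-$p$ filtration bound of \cite{Ahlgren-Masri-Rouse} (their Theorem 4.2, where the relevant reduction mod $p$ is a form on $\Gamma_0(N)$ of weight $(k-1)p+1=p+1$) bounds $\ord_{\infty}(f)$, and an inequality of the shape \eqref{eqn:main ineq} shows this bound is at most $g(pN)$. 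That mod-$p$ step is precisely the ``genuine arithmetic input'' you anticipated needing but did not supply; to complete your outline you would have to import it (or Ogg's original degeneration of $X_0(pN)$ in characteristic $p$), since no purely characteristic-zero refinement of multiplicity one is known that yields the factor-two saving your reduction requires.
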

   A non-geometric proof of Theorem 1.1 was given in [AMR09] (previously, certain cases of level $p\ell$ for distinct primes  $p$ and $\ell$ were considered in \cite{Kohnen}, \cite{Kilger}). To state our first result, when $N$ is a positive integer and $p$ is a prime such that $p \nmid N$, we require the Atkin-Lehner operator $W_{p}^{pN}$ on  $S_{k}(pN)$ defined in \eqref{eqn:Atkin-Lehner}. Furthermore, if  
\[
f(z)= \sum_{n=n_0}^{\infty} a(n) q^{n} \in S_k(N),
\] 
define 
\[
 v_p(f):=\inf\{v_p(a(n))\}.
\]
 With this notation, we prove the following theorem.
\begin{theorem}\label{thm:main}
    Let $N$ be a positive integer and $k$ be a positive even integer. Let 
    $p$ be a prime with $p \geq \max(5,k+1)$ and $p \nmid N$. Suppose that $0 \neq f \in S_k(pN)$ satisfies 
\[
 v_p(f)=0, \text{   }\text{  }v_p(f|_{k}W^{pN}_{p}) \geq 1-k/2.
\]
 Then  
\[
 \ord_{\infty}(f) \leq \dim(S_k(pN)).
\]
\end{theorem}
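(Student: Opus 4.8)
The plan is to reduce everything modulo $p$ and to use the Atkin--Lehner trace identity to pin the reduction $\bar f$ down in terms of genuine level-$N$ data. Assume for contradiction that $\ord_\infty(f) > \dim S_k(pN)$. Since $v_p(f)=0$, the reduction $\bar f$ of $f$ modulo $p$ is a nonzero modular form of weight $k$ on $\Gamma_0(pN)$ over $\overline{\mathbb{F}}_p$, and because reducing a $q$-expansion can only raise its order of vanishing, $\ord_\infty(\bar f)\ge \ord_\infty(f) > \dim S_k(pN)$. It therefore suffices to bound $\ord_\infty(\bar f)$.

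The main tool I would use is the trace $\Tr=\Tr^{pN}_N\colon S_k(pN)\to S_k(N)$, together with the standard identity
\[
\Tr(F) \;=\; F + p^{1-k/2}\,\big(F|_{k} W_p^{pN}\big)\,|\,U_p, \qquad U_p\left(\sum_n a(n)q^n\right)=\sum_n a(pn)q^n .
\]
Applying this to $F=f|_{k}W_p^{pN}$ and using that $|_{k}W_p^{pN}$ is an involution gives
\[
p^{k/2-1}\,\Tr\big(f|_{k} W_p^{pN}\big) \;=\; p^{k/2-1}\big(f|_{k} W_p^{pN}\big) + f\,|\,U_p .
\]
The hypothesis $v_p(f|_{k}W_p^{pN})\ge 1-k/2$ says precisely that $G:=p^{k/2-1}(f|_{k} W_p^{pN})$ is $p$-integral, so the right-hand side is $p$-integral and reduces to $\bar f\,|\,U_p \equiv \bar H - \bar G \pmod p$, where $H:=p^{k/2-1}\Tr(f|_{k} W_p^{pN})\in S_k(N)$. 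The exact threshold $1-k/2$ is dictated by the action of $W_p^{pN}$ on the degeneracy maps: writing $V_p g(z)=g(pz)$ for $g\in S_k(N)$, a direct computation gives $(V_p g)|_{k} W_p^{pN}=p^{-k/2}g$, so a pure Frobenius (i.e.\ $V_p$) contribution would force $v_p(f|_{k}W_p^{pN})=-k/2 < 1-k/2$. This is the mechanism that excludes the inflated part of $\bar f$ coming from $V_p$, and it is also where $p\ge k+1$ enters: with $k\le p-1$ the weight lies in the fundamental range where the filtration of a mod-$p$ form is well behaved and reduction from level $N$ does not drop the weight.

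With $\bar f$ controlled in this way in terms of the reductions of the level-$N$ forms $\Tr(f)$ and $\Tr(f|_{k} W_p^{pN})$, I would finish by bounding the order of vanishing at $\infty$ through these level-$N$ forms and then comparing dimensions. Using the explicit formula for $\dim S_k(\Gamma_0(M))$, one checks the elementary inequality $p\,\dim S_k(N) < \dim S_k(pN)$: the genus term contributes $(k-1)\tfrac{p+1}{12}[\SL_2(\Z):\Gamma_0(N)]$ against $p(k-1)\tfrac{1}{12}[\SL_2(\Z):\Gamma_0(N)]$, and the growth of the cusp count works in our favour. This converts the level-$N$ bound into $\ord_\infty(\bar f)\le \dim S_k(pN)$ and contradicts the displayed lower bound.

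The main obstacle is the middle step: recovering $\bar f$ itself, and hence $\ord_\infty(\bar f)$, from the $U_p$-image $\bar f\,|\,U_p$ together with the Atkin--Lehner symmetry. Equivalently, one must rule out that the two $p$-adic hypotheses are consistent with $\bar f$ lying in the image of $V_p$ over a level-$N$ form of large order of vanishing; controlling this Frobenius/filtration inflation, and making the passage from ``$\bar f\,|\,U_p$ is a level-$N$ reduction'' to ``$\bar f$ has the expected small order at $\infty$'' precise, is the crux of the argument. By contrast, the concluding dimension comparison is routine bookkeeping.
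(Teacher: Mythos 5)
You have correctly identified the mechanism that underlies the theorem---reduction modulo $p$, the trace identity, and the role of the threshold $1-k/2$ in excluding a $V_p$/Frobenius-inflated contribution---but the step you yourself flag as ``the crux'' is precisely the content that has to be proved, and your proposal does not prove it. Your displayed identity controls only $\bar f\,|\,U_p$, and passing from $\bar f\,|\,U_p$ back to $\bar f$ is lossy: $U_p$ divides the order of vanishing at $\infty$ by roughly $p$, so even a sharp bound on $\ord_\infty(\bar f\,|\,U_p)$ gives nothing close to the needed bound on $\ord_\infty(\bar f)$. Moreover, $\bar G$ is the reduction of $p^{k/2-1}f|_{k}W_p^{pN}$, which lives at level $pN$, not $N$, so it is not true that $\bar f$ has been expressed in terms of level-$N$ data. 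The paper's proof instead invokes \cite[Thm.~4.2]{Ahlgren-Masri-Rouse}, whose content is exactly the missing step: under the two $p$-adic hypotheses, $\bar f$ behaves like a form of weight $(k-1)p+1$ on $\Gamma_0(N)$ vanishing at every cusp, with forced elliptic zeros counted by $\alpha_2$ and $\alpha_3$, and the valence formula on $X_0(N)$ then yields
\begin{equation*}
\ord_\infty(f)\ \leq\ \frac{(k-1)p+1}{12}\,I(N)-\frac{1}{2}\alpha_2-\frac{1}{3}\alpha_3-\ep_\infty(N)+1.
\end{equation*}
Without this filtration/level-lowering input (or an independent proof of it), your sketch cannot bound $\ord_\infty(\bar f)$ at all; the hypotheses $v_p(f)=0$ and $v_p(f|_{k}W_p^{pN})\geq 1-k/2$, and the condition $p\geq k+1$, enter exactly there.

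The concluding comparison is also not the ``routine bookkeeping'' you describe. The inequality you propose, $p\dim S_k(N)<\dim S_k(pN)$, is not the relevant one and would not suffice even granting the crux. What is required is to compare the valence bound above with $\dim S_k(pN)$ via the explicit dimension formula; after using $I(pN)=(p+1)I(N)$ and $\ep_\infty(pN)=2\ep_\infty(N)$, the difference of the genus terms leaves only $\frac{k-2}{12}I(N)$ plus elliptic-point corrections, so the two sides agree to within $O(1)$ and the paper must verify
\begin{equation*}
\frac{k-2}{12}I(N)+\left(\floor{\frac{k}{4}}-\frac{k-1}{4}\right)\ep_2(pN)+\left(\floor{\frac{k}{3}}-\frac{k-1}{3}\right)\ep_3(pN)+\frac{1}{2}\alpha_2+\frac{1}{3}\alpha_3\ \geq\ 1
\end{equation*}
by a case analysis over the residues of $(k,p)$ modulo $12$. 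The sharp examples in the paper (e.g.\ $N=1$, $p=19$, $k=16$) show this final inequality can be an equality, so coarser estimates of the elliptic-point terms genuinely fail; this part of the argument is delicate, not automatic.
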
         
   As a corollary, we prove an analogue of Ogg's theorem.
\begin{corollary}\label{thm:analogue}
   Suppose that $N$ is a positive integer, that $k$ is a positive even integer, and that $p$ is a prime with $p \geq \max(5,k+1)$ and $p \nmid N$. If $S_{k}(N)= \{0\}$, then for $0 \neq f \in S_{k}(pN)$, we have 
\[
    \ord_{\infty}(f) \leq \dim(S_k(pN)).
\]
\end{corollary}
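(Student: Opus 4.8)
The plan is to deduce the Corollary from Theorem~\ref{thm:main} by normalizing $f$ and then verifying the two hypotheses of that theorem. The first hypothesis is handled by scaling. Given $0 \neq f \in S_k(pN)$ with rational Fourier coefficients, the quantity $v_p(f) = \inf_n v_p(a(n))$ is a finite integer, since $S_k(pN)$ has a basis of forms with integral $q$-expansions and hence the denominators of the coefficients of any rational form are bounded. Replacing $f$ by $p^{-v_p(f)}f$ produces a form with $v_p(f)=0$, and this scaling changes neither $\ord_{\infty}(f)$ nor $\dim(S_k(pN))$. So it suffices to prove the bound assuming $v_p(f)=0$, which is the first hypothesis of Theorem~\ref{thm:main}.

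The substance is to verify the second hypothesis $v_p(f|_k W_p^{pN}) \geq 1-k/2$, and this is where $S_k(N)=\{0\}$ enters. I would first observe that $S_k(N)=\{0\}$ forces the $p$-old subspace of $S_k(pN)$, namely the span of $\{g(z),\, g(pz) : g \in S_k(N)\}$, to be trivial, so that every element of $S_k(pN)$ is new at $p$. I would then invoke the Atkin--Lehner relation for forms new at $p$: for a newform $g$ new at $p$ one has $g|U_p = a_p(g)\,g$ and $g|_k W_p^{pN} = w_p(g)\,g$ with $a_p(g) = -w_p(g)\,p^{k/2-1}$, and this persists under the degeneracy maps $z \mapsto dz$ for $d$ coprime to $p$. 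Consequently, on the full space $S_k(pN)$ one gets the operator identity $U_p = -p^{k/2-1}W_p^{pN}$, equivalently
\[
 f|_k W_p^{pN} = -p^{1-k/2}\,(f|U_p).
\]

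The valuation bound is then immediate. Since $U_p$ sends $\sum a(n)q^n$ to $\sum a(pn)q^n$, the coefficients of $f|U_p$ form a subset of those of $f$, whence $v_p(f|U_p) \geq v_p(f) = 0$. Therefore $v_p(f|_k W_p^{pN}) = (1-k/2) + v_p(f|U_p) \geq 1-k/2$, which is exactly the second hypothesis. Applying Theorem~\ref{thm:main} yields $\ord_{\infty}(f) \leq \dim(S_k(pN))$, completing the argument.

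I expect the main obstacle to be the middle step: establishing the operator identity $U_p = -p^{k/2-1}W_p^{pN}$ on all of $S_k(pN)$. This needs two ingredients fitted together carefully, namely that $S_k(N)=\{0\}$ annihilates every $p$-oldform, and that the eigenform relation $a_p(g) = -w_p(g)\,p^{k/2-1}$ holds in weight $k$ with precisely this power of $p$ under the normalization of $W_p^{pN}$ fixed in \eqref{eqn:Atkin-Lehner}. The reassuring consistency check is that the exponent $1-k/2$ produced by $W_p^{pN} = -p^{1-k/2}U_p$ matches verbatim the threshold appearing in Theorem~\ref{thm:main}, which pins down the normalization; granting this, the scaling and the $U_p$-valuation estimate are entirely formal.
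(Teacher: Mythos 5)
Your proof is correct, and its outer skeleton matches the paper's: normalize so that $v_p(f)=0$ (legitimate by bounded denominators), establish the identity $f\sk W_p^{pN}=-p^{1-\frac{k}{2}}\,f\sl U_p$, deduce $v_p(f\sk W_p^{pN})\geq 1-\frac{k}{2}$ from $v_p(f\sl U_p)\geq v_p(f)=0$, and apply Theorem~\ref{thm:main}. The difference is in how you reach the middle identity, and your route is genuinely different. The paper gets it in one line from the trace map: applying $\Tr_{N}^{pN}$ to $f\sk W_p^{pN}$ and using that $W_p^{pN}$ is an involution gives $\Tr_{N}^{pN}(f\sk W_p^{pN})=f\sk W_p^{pN}+p^{1-\frac{k}{2}}f\sl U_p$, which vanishes because the trace lands in $S_k(N)=\{0\}$. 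You instead observe that $S_k(N)=\{0\}$ kills the $p$-old subspace (and all newform constituents of level prime to $p$, since $S_k(M)\subseteq S_k(N)$ for $M\mid N$), decompose $S_k(pN)$ into classes $g(dz)$ with $g$ new of level exactly divisible by $p$ and $\gcd(d,p)=1$, and invoke the Atkin--Lehner pseudo-eigenvalue relation $a_p(g)=-w_p(g)\,p^{\frac{k}{2}-1}$ together with the commutation of $U_p$ and $W_p^{pN}$ with the degeneracy maps $V_d$; all of this is standard (Atkin--Lehner's Theorem~3, extended by Li to general level, with exactly the normalization of \eqref{eqn:Atkin-Lehner}), and your signs and powers of $p$ are right --- indeed your eigenvalue relation is precisely the eigenform-by-eigenform content of the paper's trace identity. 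What each approach buys: yours explains conceptually why the identity holds and pins down the constant via a consistency check, but it imports the full newform machinery and a decomposition that lives over $\C$ (harmless, since the operator identity defined over $\Q$ then restricts to rational forms, but worth saying explicitly); the paper's argument is purely formal, needs nothing beyond the definition of $\Tr_{N}^{pN}$ and the involution property of $W_p^{pN}$, and stays entirely within the rational-coefficient framework. If you write yours up, make explicit that $U_p=-p^{\frac{k}{2}-1}W_p^{pN}$ is an identity of operators on $S_k(pN)$ \emph{only} under the hypothesis $S_k(N)=\{0\}$, and cite the compatibility $(g\sl V_d)\sk W_p^{pN}=(g\sk W_p^{M})\sl V_d$ for $\gcd(d,p)=1$, which is the one step you wave at rather than prove.
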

    There is a finite list of $N$ and $k$ for which $S_k(N)=\{0\}$. For $k=2$, there are $15$ such values of $N$ \cite[pg. 110]{Ono}. For $k \geq 4$, the rest are
\begin{align*}
    k&=4: N= 1, 2, 3, 4 \\
     k&=6: N= 1, 2 \\
    k&=8,10,14: N= 1. 
\end{align*}
     
     It is natural to seek to understand the subspace of forms $f \in S_{k}(N)$ which vanish to order greater than the dimension.
     If $N$ is a positive integer and $k$ is a positive even integer, define the subspace
\[
      W_{k}(N):=\{f \in S_k(N): \ord_{\infty}(f) > \dim(S_k(N))\}.
\]
     With this notation, we have $W_{2}(N)=\{0\}$ if and only if $\infty$ is not a Weierstrass point on $X_0(N)$. As a corollary of Theorem~\ref{thm:main}, we obtain a bound for $\dim(W_{k}(pN))$.
\begin{corollary}\label{thm:Subspace}
 Suppose that $p \geq \max(5,k+1)$ is a prime satisfying $p \nmid N$. Then we have 
\[
\dim(W_{k}(pN)) \leq \dim(S_k(N)).
\]
 \end{corollary}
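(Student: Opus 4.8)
The plan is to realize $W_k(pN)$, after reduction modulo $p$, as a subspace that injects into the $\dim(S_k(N))$-dimensional ``old'' part of $S_k(pN)$, with Theorem~\ref{thm:main} supplying the injectivity. Throughout write $W_p := W^{pN}_p$, let $\iota\colon S_k(N)\hookrightarrow S_k(pN)$ be the inclusion, and let $V_p$ be the map $g(z)\mapsto g(pz)$.

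First I would fix the $\Z_{(p)}$-lattice $\Lambda := S_k(pN)\cap\Z_{(p)}[[q]]$ of forms with $p$-integral coefficients, together with its reduction $\Lambda\to\overline{\Lambda}\subseteq\mathbb{F}_p[[q]]$; since $\Lambda$ spans $S_k(pN)$ over $\mathbb{Q}$, one has $\dim_{\mathbb{F}_p}\overline{W_k(pN)\cap\Lambda}=\dim_{\mathbb{Q}}W_k(pN)$. The structural input I need is the explicit action of $W_p$ on the degeneracy maps: a direct computation with the Atkin--Lehner matrix, factored as an element of $\Gamma_0(N)$ times $\left(\begin{smallmatrix}p&0\\0&1\end{smallmatrix}\right)$, gives $\iota(g)|_k W_p = p^{k/2}V_p(g)$, and the involution property then forces $V_p(g)|_k W_p = p^{-k/2}\iota(g)$, while on the $p$-new subspace $W_p$ acts with eigenvalue $\pm 1$. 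In particular $p^{k/2}W_p$ preserves $\Lambda$, i.e.\ $v_p(f|_k W_p)\ge v_p(f)-k/2$ for all $f$. I would then define the $\mathbb{F}_p$-linear map
\[
\Phi\colon \overline{W_k(pN)\cap\Lambda}\longrightarrow\overline{\Lambda},\qquad \Phi(\overline{x}) := \overline{\,p^{k/2}(x|_k W_p)\,}.
\]

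Next I would verify injectivity and bound the image. For injectivity, take $\overline{x}\neq 0$ and lift to $x\in W_k(pN)\cap\Lambda$ with $v_p(x)=0$. If $\Phi(\overline{x})=0$ then $v_p(x|_k W_p)\ge 1-k/2$, so $x$ satisfies both hypotheses of Theorem~\ref{thm:main}; that theorem gives $\ord_\infty(x)\le\dim(S_k(pN))$, contradicting $x\in W_k(pN)$. Hence $\Phi$ is injective — its kernel condition is precisely the contrapositive of the main theorem. To bound the image, note $\Phi(\overline{x})$ lies in the reduction of the integral image $p^{k/2}W_p(\Lambda)$. From the computation above, on each two-dimensional old block $\langle\iota(g),V_p(g)\rangle$ the operator $p^{k/2}W_p$ has Smith normal form $\mathrm{diag}(1,p^{k})$, while on the $p$-new part it is $\pm p^{k/2}$. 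Since $k\ge 2$, exactly one elementary divisor per old block is a $p$-adic unit and none arise from the new part, so the reduction $\overline{p^{k/2}W_p}$ has rank $\dim(S_k(N))$, with image $\overline{\iota(S_k(N))}$. Combining injectivity with $\mathrm{im}(\Phi)\subseteq\overline{\iota(S_k(N))}$ yields $\dim(W_k(pN))\le\dim(S_k(N))$.

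The main obstacle is the integral input rather than the linear algebra: I must know that $p^{k/2}W_p$ genuinely preserves the $q$-integral lattice $\Lambda$ and that its reduction has image exactly the $\dim(S_k(N))$-dimensional old part. The subtlety is that $\Lambda$ can differ from $\Lambda_{\mathrm{old}}\oplus\Lambda_{\mathrm{new}}$ by a $p$-power index precisely when an old form is congruent modulo $p$ to a new one, which would invalidate the elementary divisor count on all of $\Lambda$. I expect the hypothesis $p\ge\max(5,k+1)$, placing us in weight $k<p$, to be exactly what rules out such congruences and guarantees the clean integral behavior of the Atkin--Lehner operator, so that the computation of $\overline{p^{k/2}W_p}$ is valid on the whole lattice. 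Specializing to $k=2$ recovers $\dim(W_2(pN))\le g(N)$, hence Ogg's theorem for $p\ge 5$, which is a useful consistency check on the normalizations.
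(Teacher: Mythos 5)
Your linear-algebra skeleton (reduce $W_k(pN)$ mod $p$, map it by $p^{k/2}W_p^{pN}$, use Theorem~\ref{thm:main} for injectivity) could in principle be made to work, and your injectivity step is exactly the paper's use of Theorem~\ref{thm:main}. But the proposal has a genuine gap at its integral foundations, and you have located it yourself without closing it. Both the well-definedness of $\Phi$ (equivalently, the bound $v_p(f|_k W_p^{pN})\geq v_p(f)-k/2$ on \emph{all} of $\Lambda$) and the computation of the rank and image of the reduced operator are deduced from a Smith normal form calculation on the sublattice $\Lambda_{\mathrm{old}}\oplus\Lambda_{\mathrm{new}}$, together with the expectation that $p\geq k+1$ rules out mod-$p$ congruences between $p$-old and $p$-new forms. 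That expectation is false: such congruences are precisely the subject of Ribet-type level raising and lowering, and they occur already in weight $2<p$ for arbitrarily large $p$ (for instance, a newform of level $pN$ whose mod-$p$ Galois representation is unramified at $p$ is congruent to a form of level $N$). Moreover, even granting that $T=p^{k/2}W_p^{pN}$ preserves both lattices, the number of unit elementary divisors is \emph{not} invariant under passing to a super-lattice of $p$-power index: the operator with matrix $\pmatrix{0}{1}{p^2}{0}$ on $\langle e_1,e_2\rangle$ becomes $\pmatrix{0}{p}{p}{0}$ on $\langle p^{-1}e_1,e_2\rangle$, so the mod-$p$ rank can change when $\Lambda\neq\Lambda_{\mathrm{old}}\oplus\Lambda_{\mathrm{new}}$. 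So neither of your two integral inputs is established. (The lattice-preservation statement is in fact a known theorem, provable via the $q$-expansion principle on the Deligne--Rapoport model, but not by your congruence-free heuristic; and the image containment, if you want it, should come from the trace identity $p^{k/2}f|_kW_p^{pN} = p^{k/2}\Tr_N^{pN}(f|_kW_p^{pN}) - p\,f| U_p$, which places the reduction inside the reduction of level-$N$ forms regardless of congruences.)

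The paper sidesteps all of this by never reducing mod $p$. It works with the rational subspace $S:=\{f\in S_k(pN): f|_k W_p^{pN}+p^{1-\frac{k}{2}}f| U_p=0\}$, which is the image under the Atkin--Lehner involution of $\ker(\Tr_N^{pN})$ and hence has codimension exactly $\dim(S_k(N))$, because the trace is surjective. For $0\neq f\in S$ scaled so that $v_p(f)=0$, the defining relation gives $v_p(f|_kW_p^{pN})=v_p\bigl(p^{1-\frac{k}{2}}f|U_p\bigr)\geq 1-\frac{k}{2}$ \emph{for free}, since $U_p$ preserves $p$-integrality; Theorem~\ref{thm:main} then yields $S\cap W_k(pN)=\{0\}$, and $\dim(W_k(pN))\leq\dim(S_k(N))$ follows from $S\oplus W_k(pN)\subseteq S_k(pN)$. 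In short: where you need a deep integrality theorem for $W_p^{pN}$ on the whole lattice plus a delicate rank computation, the paper only needs the trivial integrality of $U_p$ and the surjectivity of the trace; and once you invoke the trace identity to repair your image claim, you have effectively rediscovered the paper's argument with an unnecessary layer of reduction mod $p$ on top.
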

Note that this implies Theorem~\ref{thm:Ogg} in the case $k=2$. It is interesting to note that the bound in Corollary~\ref{thm:Subspace} is independent of $p$. Thus, for fixed $N$, the spaces $W_{k}(pN)$ have uniformly bounded dimension as $p \rightarrow \infty$.
\begin{remark}
For squarefree $N$, Arakawa and B\"{o}cherer \cite{Arakawa-Bocherer} study the space
\[
S_k(N)^{*}:= \{ f \in S_k(N):  f\sk W_{p}^{pN}+p^{1-\frac{k}{2}}\sl U_p=0 \ \  \text{for all $p \mid N$} \}.
\]
We will use a similar subspace to prove Corollary~\ref{thm:Subspace}.
\end{remark}
The following examples, which we computed with Magma, illustrate Corollary~\ref{thm:Subspace} for small values of $N$.
\begin{example}
 For an example which is sharp, set $N=1$, $p=19$, and $k=16$. Here, we have $\dim(S_k(pN))=24$ and $\dim(S_k(N))=1$. In this case, there is a form $f \in S_k(pN)$ with $f=q^{25}+\cdots$.
 \end{example}
 \begin{example}
 To get an example which is sharp and for which $pN$ is not prime, set $N=2$, $p=23$, and $k=12$. Here, $\dim(S_k(pN))=64$ and $\dim(S_k(N))=2$. In this case, there are forms $f$ and $g$ with $f=q^{67}+\cdots$ and $g=q^{68}+\cdots$.
 \end{example}
 \begin{example}
 Corollary~\ref{thm:Subspace} is not always sharp. For example, set $N=1$, $p=29$, and $k=28$. Here, $\dim(S_k(pN))=67$ and $\dim(S_k(N))=3$. In this case, there is no non-zero $f \in S_k(N)$ satisfying $\ord(f) > 67$.
 \end{example}
  The paper is organized as follows. Section $2$ contains the background necessary to prove these results. Section $3$ contains the proof of Theorem~\ref{thm:main}, which uses results from \cite{Ahlgren-Masri-Rouse}. Finally, Section $4$ contains the proofs of Corollary~\ref{thm:analogue} and Corollary~\ref{thm:Subspace}. 
\section{Preliminaries on Modular Forms}
The definitions and facts given here can be found in \cite{Diamond-Shurman} and \cite{Ahlgren-Masri-Rouse}. Let $N$ and $k$ be positive integers.
Let $\ep_{\infty}(N)$ denote the number of cusps on $X_0(N)$, let $g(N)$ denote its genus, and let $\ep_{2}(N)$, $\ep_{3}(N)$ denote the numbers of elliptic points of orders $2$ and $3$, respectively. Then we have
\begin{equation}\label{genus}
g(N)=\mfrac{[\SL_2(\Z):\Gamma_0(N)]}{12}-\mfrac12\ep_{\infty}(N)-\mfrac14\ep_{2}(N)-\mfrac13\ep_{3}(N)+1,
\end{equation}
\[ \ep_{2}(N)= \begin{cases} 
0 & \text{if } 4 \mid N, \\
\displaystyle\prod_{p \mid N} \(1+\(\mfrac{-4}{p}\)\) & \text{otherwise,} 
\end{cases}
\] 
\[ \ep_{3}(N)= \begin{cases} 
0 & \text{if } 9 \mid N, \\
\displaystyle\prod_{p \mid N} \(1+\(\mfrac{-3}{p}\)\) & \text{otherwise.} 
\end{cases}
\]
We have the well-known formula
\[
[\SL_{2}(\Z):\Gamma_0(N)]=N\prod_{p \mid N}\(1+\mfrac{1}{p}\).
\] 
For weights $k \geq 4$, we have
\begin{equation}\label{*}
\dim(S_k(N))=(k-1)(g(N)-1)+\floor{\mfrac{k}{4}}\ep_{2}(N)+\floor{\mfrac{k}{3}}\ep_{3}(N)+\(\mfrac{k}{2}-1\)\ep_{\infty}(N).
\end{equation}  
   
     A form in $S_k(N)$ may have forced vanishing at the elliptic points. As in \cite{Ahlgren-Masri-Rouse}, let $\alpha_{2}(N,k)$ and $\alpha_{3}(N,k)$ count the number of forced complex zeroes of a form $f \in S_k(N)$ at the elliptic points of order $2$ and $3$, respectively. These are given by 
\begin{equation}\label{eq:table}
(\alpha_{2}(N,k),\alpha_{3}(N,k))= \begin{cases} 
(\ep_{2}(N),2\ep_{3}(N)) & \text{if } k \equiv 2 \pmod{12}, \\
(0,\ep_{3}(N)) & \text{if } k \equiv 4 \pmod{12}, \\
(\ep_{2}(N),0) & \text{if } k \equiv 6 \pmod{12}, \\
(0,2\ep_{3}(N)) & \text{if } k \equiv 8 \pmod{12}, \\
(\ep_{2}(N),\ep_{3}(N)) & \text{if } k \equiv 10 \pmod{12}, \\
(0,0) & \text{if } k \equiv 0 \pmod{12}.
\end{cases}
\end{equation}  

  If  $d=\dim(S_k(N)),$ then $S_k(N)$ has a basis $\{f_1,...,f_d\}$ with integer coefficients with the property 
\begin{equation}\label{eqn:INTEGRALITY}
f_i(z)=a_iq^{c_i}+O(q^{c_{i}+1}),  \ \ \ \ \ \ \ \ \ \ \ \ \ \ \ \ \ 1 \leq i \leq d,
\end{equation}
where $a_i \neq 0$ and $c_1<c_2<...<c_d.$ This fact implies that every non-zero $f \in S_k(N)$ has bounded denominators.

For $f \in S_k(N)$ and 
\[
 \alpha=\left(\begin{matrix}a & b \\c & d\end{matrix}\right) \in \GL_{2}^{+}(\Q),
\]
 define the weight $k$ slash operator by
\[
f(z)\sk \alpha := \det(\alpha)^{\frac{k}{2}}(cz+d)^{-k}f\(\frac{az+b}{cz+d}\).
\]
 If $p$ is a prime with $p \nmid N$, let $a,b \in \Z$ satisfy $p^{2}a-pNb=p$. Define the Atkin-Lehner operator $W_{p}^{pN}$ on $S_k(pN)$ by 
   \begin{equation}\label{eqn:Atkin-Lehner}
   f\sk W_{p}^{pN}:= f\sk\left(\begin{matrix}pa & 1 \\pNb & p\end{matrix}\right).
   \end{equation}
The operator $W_{p}^{pN}$ preserves the rationality of the coefficients of $f \in S_k(N)$
\cite[~Thm.~2.6]{rationality}.  
For any prime $p$, define the $U_p$ operator by
   \[
   \(\sum a(n) q^{n}\) \mid U_{p}:= \sum a(pn)q^{n}.
      \]
The trace map
 \[
   \Tr_{N}^{pN}:S_k(pN) \rightarrow S_k(N)
   \]
is defined by 
\[
\Tr_{N}^{pN}(f):=f+p^{1-\tfrac{k}{2}}f\sk W_{p}^{pN}\sl U_{p}.
\]
This map is surjective, since for $f \in S_{k}(N)$, we have $\Tr_{N}^{pN}(f)=(p+1)f$.
      \section{Proof of Theorem~\ref{thm:main}}
   
    Let $N$ be a positive integer and $k$ be a positive even integer. When $k=2$, Theorem~\ref{thm:main} follows from \cite[Thm. 1.1]{Ahlgren-Masri-Rouse}.
    Therefore, we may assume that $k \geq 4$. Throughout, let 
\[
\alpha_2:=\alpha_2(N,(k-1)p+1),
\] 
\[
\alpha_3:= \alpha_3(N,(k-1)p+1),
\]
and
\[
I(N):=[\SL_2(\Z):\Gamma_0(N)].
\]
    Suppose that $p \geq \max(5,k+1)$ is a prime with $p \nmid N$, and that $f \in S_{k}(pN)$ satisfies $v_p(f)=0$ and $v_p(f\sk W_{p}^{pN}) \geq 1-\frac{k}{2}$. By \cite[Thm. 4.2]{Ahlgren-Masri-Rouse}, we have
\begin{equation}
 \ord_{\infty}(f) \leq \mfrac{(k-1)p+1}{12}I(N)-\mfrac{1}{2}\alpha_{2}-\mfrac{1}{3}\alpha_{3}-\ep_{\infty}(N)+1.
 \end{equation}
 Using \eqref{genus}, \eqref{*}, and the facts that $I(pN)=(p+1)I(N)$ and $\ep_{\infty}(pN)=2\ep_{\infty}(N)$,
 the proof of Theorem~\ref{thm:main} reduces to proving that
 \begin{equation}\label{eqn:main ineq}
 \mfrac{k-2}{12}I(N) +\(\floor{\mfrac{k}{4}}  - \mfrac{k-1}{4}\)\ep_{2}(pN)+\(\floor{\mfrac{k}{3}}-\mfrac{k-1}{3}\)\ep_{3}(pN)+\mfrac{1}{2}\alpha_{2}+\mfrac{1}{3}\alpha_{3} \geq 1.
 \end{equation}
 The proof of \eqref{eqn:main ineq} breaks up into several cases.  
 \subsection{$\alpha_{2}=0 \text{ and } \alpha_{3}=0$}
 Suppose that $\ep_{2}(N)=\ep_{3}(N)=0$.
Then \eqref{eqn:main ineq}  simplifies to
\begin{equation}\label{eqn:simple1}
 \mfrac{k-2}{12}I(N) \geq 1.
 \end{equation}
 The definitions of $\ep_{2}(N)$ and $\ep_{3}(N)$ imply that $N \geq 4$. Thus, we have \eqref{eqn:main ineq} because $I(N) \geq 6$ whenever $N \geq 4$.
  
  Assume now that $\ep_{2}(N) \neq 0 \text{ and } \ep_{3}(N)=0$.
 From \eqref{eq:table}, we have 
 \[
 (k-1)p+1 \equiv 0 \pmod{4}, 
\]
so
\[
(k,p) \equiv (0,1) \text{ or } (2,3) \pmod{4}.
\]
The definitions of $\ep_{2}(N)$ and $\ep_{3}(N)$ imply that $N \geq 2$, so that $I(N) \geq 3$. In the former case, \eqref{eqn:main ineq} reduces to 
\begin{equation}\label{eqn:simple3}
\mfrac{k-2}{12}I(N)+\mfrac{1}{2}\ep_{2}(N) \geq 1,
\end{equation}
which holds since $k \geq 4$. In the latter case, \eqref{eqn:main ineq} reduces to \eqref{eqn:simple1}, which holds since $k \geq 6$.

  Now assume that $\ep_{2}(N)=0 \text{ and } \ep_{3}(N) \neq 0$. From \eqref{eq:table}, we have $(k-1)p+1 \equiv 0 \pmod{3}$, so
\[
(k,p) \equiv (0,1) \text{ or } (2,2) \pmod{3}.
\]
   In the first case, \eqref{eqn:main ineq} reduces to
\begin{equation}\label{eqn:2}
\mfrac{k-2}{12}I(N) +\mfrac{2}{3}\ep_{3}(N)\geq 1,
\end{equation}
which holds since $k \geq 6$.
If $k \equiv 2 \pmod{3}$, then \eqref{eqn:main ineq} reduces to \eqref{eqn:simple1},
which holds since $k \geq 8$.
 
 Finally, assume that $\ep_{2}(N) \neq 0$ and $\ep_{3}(N) \neq 0$. By \eqref{eq:table} we have
\[
(k-1)p+1 \equiv 0 \pmod{12}.
\]
Consider the $4$ possible classes of $(k,p)\pmod{12}$. If $(k,p) \equiv (2,11) \pmod{12}$,
then we have $\ep_{2}(pN)=\ep_{3}(pN)=0$, so \eqref{eqn:main ineq} reduces to \eqref{eqn:simple1}. 
Here, we have $k \geq 14$, so \eqref{eqn:simple1} holds. If  $(k,p) \equiv (6,7)\pmod{12}$, then \eqref{eqn:main ineq} becomes \eqref{eqn:2}, which holds because $k \geq 6$ and $\ep_{3}(N) \geq 1$. 
If $(k,p) \equiv (8,5) \pmod{12}$, then \eqref{eqn:main ineq} becomes \eqref{eqn:simple3}. We have $k \geq 8$ and $\ep_{2}(N) \geq 1$, so \eqref{eqn:simple3} follows. 
Finally, if $(k,p) \equiv (0,1) \pmod{12}$, then \eqref{eqn:main ineq} becomes
\begin{equation}
\mfrac{k-2}{12}I(N)+\mfrac{1}{2}\ep_{2}(N)+\mfrac{2}{3}\ep_{3}(N) \geq 1,
\end{equation}
which holds since $\ep_{2}(N) \geq 1$ and $\ep_{3}(N) \geq 1$. This finishes the proof when $\alpha_{2}=\alpha_{3}=0$. The remaining cases use similar ideas; fewer details will be given.
\subsection{$\alpha_{2} \neq 0$ and $\alpha_{3} = 0$}
In this case, \eqref{eqn:main ineq} becomes
\begin{equation}\label{eqn:simple5}
\mfrac{k-2}{12}I(N) +\(\floor{\mfrac{k}{4}}  - \mfrac{k-1}{4}\)\ep_{2}(pN)+\(\floor{\mfrac{k}{3}}-\mfrac{k-1}{3}\)\ep_{3}(pN)+\mfrac{1}{2}\alpha_{2} \geq 1.
\end{equation}
    
     If $\ep_{3}(N)=0$, then $N \geq 2$. Since $I(N) \geq 3$ and $k \geq 4$, \eqref{eqn:simple5} holds. So, assume that $\ep_{3}(N) \neq 0$. By \eqref{eq:table}, we have $(k-1)p+1 \equiv 6 \pmod{12}.$ The strategy is then to consider the $4$ possibilities for $(k,p)\pmod{12}$. We illustrate this only when $(k,p) \equiv (6,1) \pmod{12}$. In this case, the quantity in \eqref{eqn:simple5} is at least $\frac{1}{3}I(N)+\frac{2}{3}\ep_{3}(N)  \geq 1$.

\subsection{$\alpha_{2}=0$ and $\alpha_{3} \neq 0$} 
In this case, \eqref{eqn:main ineq} reduces to
\begin{equation}\label{eqn:simple6}
\mfrac{k-2}{12}I(N) +\(\floor{\mfrac{k}{4}}  - \mfrac{k-1}{4}\)\ep_{2}(pN)+\(\floor{\mfrac{k}{3}}-\mfrac{k-1}{3}\)\ep_{3}(pN)+\mfrac{1}{3}\alpha_{3} \geq 1.
\end{equation}
If $\ep_{2}(N)=0$, then $N \geq 3$, so \eqref{eqn:simple6} holds. So, assume that $\ep_{2}(N) \neq 0$. By \eqref{eq:table}, we have 
\[
(k-1)p+1 \equiv 8 \pmod{12},
\]
so that $\alpha_{3} \geq 2$. We illustrate only the case $(k,p) \equiv (8,1) \pmod{12}$. In this case, \eqref{eqn:simple6} reduces to \eqref{eqn:simple3}, which holds since $k \geq 8$.
\subsection{$\alpha_{2} \neq 0 \text{ and } \alpha_{3} \neq 0 $} 
By \eqref{eq:table}, we have $(k-1)p+1 \equiv 2 \text{ or } 10 \pmod{12 }$. We illustrate only the case $(k,p) \equiv (2,1) \pmod{12}$. In this case, $\alpha_{3}=2\ep_{3}(N)$, so \eqref{eqn:main ineq} reduces to \eqref{eqn:simple1}, which holds since $k \geq 14.$

\section{Proofs of Corollary~\ref{thm:analogue} and Corollary~\ref{thm:Subspace}}
\begin{proof}[Proof of Corollary~\ref{thm:analogue}]
Suppose that $N$ is a positive integer, that $k$ is a positive even integer, and that $p$ is a prime with $p \geq \max(5,k+1)$ and $p \nmid N$. Since every non-zero $f \in S_{k}(N)$ has bounded denominators, we may assume that $v_p(f)=0$. Since $S_k(N)=\{0\}$, we have
\[
\Tr_{N}^{pN}(f\sk W_p^{pN})=f\sk W_p^{pN}+p^{1-\frac{k}{2}}f\sl U_p=0.
\] 
Thus, $v_p(f\sk W_p^{N}) \geq 1-\frac{k}{2}$, so Corollary~\ref{thm:analogue} follows from Theorem~\ref{thm:main}.     
\end{proof}
\begin{proof}[Proof of Corollary~\ref{thm:Subspace}]
Define the subspace
\[
S:=\{f\in S_k(pN): f\sk W_p^{pN}+p^{1-\tfrac{k}{2}}f\sl U_p=0\}.
\]     
Suppose that $0 \neq f \in S$. We apply Theorem~\ref{thm:main} after clearing denominators to conclude that $\ord_{\infty}(f) \leq~\dim(S_k(pN))$. Thus, $S\cap W_k(pN)=\{0\}$.
We also have $S \cong \ker(\Tr_{p}^{pN})$, since the Atkin-Lehner operator is an isomorphism. Since $\Tr_{p}^{pN}$ is surjective, we have
\[
\dim(S)=\dim(S_{k}(pN))-\dim(S_{k}(N)).
\] 
Since $S_{k}(pN)$ contains $S\oplus W_{k}(pN)$, we have $\dim(W_{k}(pN)) \leq \dim(S_k(N))$.
\end{proof}
\section{Acknowledgements}
The author would like to thank Scott Ahlgren for suggesting this project and making many helpful comments.
\bibliographystyle{amsalpha}    
      
\bibliography{Analogue} 

\providecommand{\bysame}{\leavevmode\hbox to3em{\hrulefill}\thinspace}
\providecommand{\MR}{\relax\ifhmode\unskip\space\fi MR }
\providecommand{\MRhref}[2]{%
  \href{http://www.ams.org/mathscinet-getitem?mr=#1}{#2}
}
\providecommand{\href}[2]{#2}
\begin{thebibliography}{AMR09}

\bibitem[AB03]{Arakawa-Bocherer}
Tsuneo Arakawa and Siegfried B\"{o}cherer, \emph{Vanishing of certain spaces of
  elliptic modular forms and some applications}, J. Reine Angew. Math.
  \textbf{559} (2003), 25--51. \MR{1989643}

\bibitem[AMR09]{Ahlgren-Masri-Rouse}
Scott Ahlgren, Nadia Masri, and Jeremy Rouse, \emph{Vanishing of modular forms
  at infinity}, Proc. Amer. Math. Soc. \textbf{137} (2009), no.~4, 1205--1214.
  \MR{2465641}

\bibitem[Coh19]{rationality}
Henri Cohen, \emph{Expansions at cusps and {P}etersson products in
  {P}ari/{GP}}, Elliptic integrals, elliptic functions and modular forms in
  quantum field theory, Texts Monogr. Symbol. Comput., Springer, Cham, 2019,
  pp.~161--181. \MR{3889557}

\bibitem[DS05]{Diamond-Shurman}
Fred Diamond and Jerry Shurman, \emph{A first course in modular forms},
  Graduate Texts in Mathematics, vol. 228, Springer-Verlag, New York, 2005.
  \MR{2112196}

\bibitem[Kil08]{Kilger}
Kilian Kilger, \emph{Weierstrass points on {$X_0(pl)$} and arithmetic
  properties of {F}ourier coefficients of cusp forms}, Ramanujan J. \textbf{17}
  (2008), no.~3, 321--330. \MR{2456836}

\bibitem[Koh04]{Kohnen}
Winfried Kohnen, \emph{A short remark on {W}eierstrass points at infinity on
  {$X_0(N)$}}, Monatsh. Math. \textbf{143} (2004), no.~2, 163--167.
  \MR{2097501}

\bibitem[Ogg78]{Ogg}
A.~P. Ogg, \emph{On the {W}eierstrass points of {$X_{0}(N)$}}, Illinois J.
  Math. \textbf{22} (1978), no.~1, 31--35. \MR{0463178}

\bibitem[Ono04]{Ono}
Ken Ono, \emph{The web of modularity: arithmetic of the coefficients of modular
  forms and {$q$}-series}, CBMS Regional Conference Series in Mathematics, vol.
  102, Published for the Conference Board of the Mathematical Sciences,
  Washington, DC; by the American Mathematical Society, Providence, RI, 2004.
  \MR{2020489}

\end{thebibliography}
   
        \end{document}